\numberwithin{equation}{section}
\theoremstyle{plain}
\newtheorem{thm}{\protect\theoremname}
\theoremstyle{plain}
\newtheorem{lem}{\protect\lemmaname}
\theoremstyle{plain}
\newtheorem{cor}{\protect\corollaryname}
\date{} 
\providecommand{\corollaryname}{Corollary}
\providecommand{\lemmaname}{Lemma}
\providecommand{\theoremname}{Theorem}
\begin{document}
\title{When Is the Conway-Maxwell-Poisson Distribution Infinitely Divisible?}

\author[1]{Xi Geng\thanks{{\sf{email: xi.geng@unimelb.edu.au}}.}}
\author[1]{Aihua Xia\thanks{{\sf{email: aihuaxia@unimelb.edu.au}}. Work supported by the Australian Research Council Grant No  DP190100613.}}

%\affil[1]{%
\affil[1]{%
School of Mathematics and Statistics, the University of Melbourne, Parkville VIC 3010, Australia} 

\maketitle
\vskip-1cm
\begin{abstract}
	An essential character for a distribution to play a central role in the limit theory is infinite divisibility. In this note, we prove that the Conway-Maxwell-Poisson (CMP) distribution is infinitely divisible iff it is the Poisson or geometric distribution. This explains that, despite its applications in a wide range of fields, there is no theoretical foundation for the CMP distribution to be a natural candidate for the law of small numbers. 
\end{abstract}

\vskip8pt \noindent\textit{Key words and phrases:}  Conway-Maxwell-Poisson distribution, infinite divisibility, entire function.

\vskip8pt\noindent\textit{AMS 2020 Subject Classification:}
primary 60F05; secondary 60E05, 60E07. %checked by Aihua on 7 Nov 2020.

\section{Introduction and the main result}

The fundamental driving force for the success of modelling count data is the law of small numbers and the Poisson distribution is  undoubtedly the cornerstone of the approximation theory in capturing the distribution of counts of rare events \cite{BHJ}. The main disadvantage of the Poisson distribution is that its mean and variance are identical, so when the count data is over-dispersed (resp. under-dispersed), i.e. its variance is bigger (resp. less) than its mean, the Poisson distribution is inadequate in fitting the data. There are many alternatives aiming to overcome the deficiency, e.g., compound Poisson (including negative binomial), translated Poisson \cite{BX99,Roellin05} and convolutions of infinitely divisible distributions \cite{Presman83,Kruopis86}. By introducing an extra parameter to the Poisson distribution for looking after the dispersion behaviour of the count data, the \textit{Conway-Maxwell-Poisson distribution} ${\rm CMP}(\lambda,\nu)$ \cite{Conway62} assumes the probability
mass function
\[
\mathbb{P}(X=k)=\frac{1}{Z(\lambda,\nu)}\cdot\frac{\lambda^{k}}{(k!)^{\nu}},\ k=0,1,2,\cdots,
\]
where the parameters $(\lambda,\nu)$ satisfy $\lambda,\nu>0$ or
$\nu=0,\lambda\in(0,1)$, and $Z(\lambda,\nu)$ denotes the normalising
constant. The CMP distribution is over-dispersed if $\nu<1$ and under-dispersed if $\nu>1$. Due to its smooth transition between over-dispersion and under-dispersion, it
plays a significant role in modelling count data \cite{Hilbe14} and has an extraordinarily diverse range of applications, see \cite{Sellers12} for a brief survey. However, despite some initiatives \cite{Daly16,Li2020}, there is disproportionately little advance in its analytical properties and an approximation theory based on this popular distribution. This note aims to explain the fundamental reason behind the unbalanced development.

An essential character for a distribution to play a central role in the limit theory is infinite divisibility. For the CMP distribution, we have the following result.

\begin{thm}
\label{thm:main}The distribution ${\rm CMP}(\lambda,\nu)$ is infinitely
divisible if and only if $\nu=0$ or $\nu=1$.
\end{thm}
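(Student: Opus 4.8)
The plan is to reduce the statement to a question about the zeros of an entire function. The standard tool is the classical characterisation (Feller/Katti): a probability distribution on $\mathbb{Z}_{\geq 0}$ with positive mass at $0$ is infinitely divisible if and only if it is compound Poisson, equivalently if and only if the power series of $\log G(s)$, where $G$ is the probability generating function, has nonnegative coefficients in every degree $n\geq 1$. For ${\rm CMP}(\lambda,\nu)$ one has $G(s)=f(s)/f(1)$ with $f(s)=\sum_{k\geq 0}\lambda^{k}s^{k}/(k!)^{\nu}$. For $\nu\in\{0,1\}$ the distribution is geometric or Poisson, both of which are well known to be infinitely divisible, giving the easy direction. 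For the converse, note that when $\nu>0$ the series $f$ is entire with $f(0)=1$ and strictly positive coefficients, $G$ is entire, and the degree-$\geq 1$ part of $\log G$ coincides with $\log f(s)=\sum_{n\geq1}c_{n}s^{n}$; the goal is to show that the $c_n$ cannot all be nonnegative once $\nu\notin\{0,1\}$.

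The first key step is to show that infinite divisibility forces $f$ to be \emph{zero-free}. Assuming all $c_{n}\geq 0$, the radius of convergence $R$ of $\sum_{n\geq1}c_{n}s^{n}=\log f(s)$ equals the distance from the origin to the nearest zero of $f$. If $R<\infty$, then by Pringsheim's theorem (a power series with nonnegative coefficients has a singularity at the positive real point of its circle of convergence) the point $s=R$ is singular for $\log f$, hence a zero of $f$; but $f(R)>0$ since $R>0$ and $f$ has positive coefficients. This contradiction yields $R=\infty$, i.e. $f$ has no zeros. Next I would invoke the growth of $f$: from $\log(1/a_{k})\sim \nu k\log k$ its order is exactly $1/\nu$, and a zero-free entire function of finite order is $e^{P}$ with $\deg P$ equal to the order (Hadamard), hence a nonnegative integer. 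Thus infinite divisibility already forces $1/\nu\in\mathbb{Z}_{\geq1}$, which disposes of every $\nu>1$ (order in $(0,1)$) and every $\nu\in(0,1)$ with $1/\nu\notin\mathbb{Z}$. As an elementary cross-check, a short computation gives $c_{2}=\lambda^{2}(2^{-\nu}-2^{-1})$, which is negative precisely when $\nu>1$.

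The remaining and hardest case is $\nu=1/m$ with integer $m\geq2$, where the order is an integer and the soft argument above is silent. Here I would assume $f=e^{P}$ with $\deg P=m$ and exploit the exact identity $f'=P'f$, which for the coefficients $a_{k}=\lambda^{k}/(k!)^{1/m}$ reads $(k+1)a_{k+1}=\sum_{j=1}^{m}jb_{j}\,a_{k+1-j}$. Dividing by $a_{k}$ and using $a_{k+1}/a_{k}=\lambda(k+1)^{-1/m}$ converts this into
\[
\lambda(k+1)^{(m-1)/m}=\sum_{j=1}^{m}jb_{j}\lambda^{1-j}\Bigl(\tfrac{k!}{(k+1-j)!}\Bigr)^{1/m}.
\]
The $j$-th term on the right admits an asymptotic expansion in powers $k^{(j-1)/m-\ell}$, living in the residue class $(j-1)/m$ modulo $1$, whereas the left-hand side lives entirely in the class $(m-1)/m$. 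Matching residue classes in this identity, which holds at all large integers, forces $b_{1}=\cdots=b_{m-1}=0$, so $P(s)=b_{m}s^{m}$ and $f=e^{b_{m}s^{m}}$; but then $a_{1}=0$, contradicting $a_{1}=\lambda>0$. Hence $f$ has a zero for every $\nu=1/m$, $m\geq2$, as well, completing the converse. I expect this integer-order case to be the main obstacle: unlike all other values of $\nu$, it cannot be settled by the purely qualitative Hadamard/Pringsheim input and instead demands a genuine coefficient-asymptotics argument to rule out $f=e^{P}$.
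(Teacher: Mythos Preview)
Your proposal is correct, and its overall architecture matches the paper's: infinite divisibility $\Rightarrow$ compound Poisson $\Rightarrow$ $\log f$ (equivalently, the generating function $G$ of the jumps) is entire $\Rightarrow$ $G$ is a polynomial $\Rightarrow$ $G$ must reduce to a single monomial $\Rightarrow$ contradiction with $\mathbb{P}(X=1)>0$. The differences lie in how two of these steps are implemented. First, you obtain entireness of $\log f$ via Pringsheim's theorem and then deduce that $G$ is a polynomial (and that $1/\nu$ must be an integer) by invoking the Hadamard factorisation of a zero-free entire function of order $1/\nu$; the paper instead proves a hands-on two-sided growth estimate $F(R)\asymp e^{(MR)^{1/\nu}}$ along the positive axis and combines it with a Cauchy estimate to force $G$ to be a polynomial of degree at most $d=\lceil 1/\nu\rceil$. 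Second, for the delicate integer case $\nu=1/m$, you extract the contradiction from the coefficient recursion coming from $f'=P'f$ by matching residue classes of the exponents $(j-1)/m$ in the asymptotic expansion, whereas the paper squeezes $G$ between the upper and lower growth bounds of $F(R)$ to force $\mu q_d=M^{1/\nu}$ and $q_1=\cdots=q_{d-1}=0$. Your route is more conceptual and dispatches the non-integer $1/\nu$ values in one line via Hadamard, at the cost of quoting two classical theorems; the paper's route is fully self-contained and treats the integer and non-integer cases through a single real-variable growth estimate. Either way the endpoint is the same monomial $G(z)=z^{m}$ and the same final contradiction.
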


In other words, the CMP distribution is infinitely divisible iff it is the Poisson or geometric distribution. This explains that, despite its applications in a wide range of fields, the only member in the CMP family that can capture the distribution of the count of weakly dependent rare events is the Poisson distribution and there is no theoretical foundation for the CMP distribution to be a natural candidate for the law of small numbers. In particular, Theorem~\ref{thm:main} implies that the CMP process possessing independent stationary increments with $\nu>0$ in \cite{Zhu17} does not exist except it is a Poisson process.

\section{The proof}

The sufficiency part requires no comment. In addition, the distribution on nonnegative integers is infinitely divisible if and only if it is a compound Poisson \cite[p.~290]{Feller68}, and as such compound Poisson cannot be under-dispersed (see subsection~\ref{decomp1}), 
we know that ${\rm CMP}(\lambda,\nu)$ is not infinitely divisible
when $\nu>1.$ By comparing with the probability mass function of a compound Poisson distribution, \cite{Mao20} concludes that ${\rm CMP}(\lambda,\nu)$ is not infinitely divisible when $\nu\in (0.33,1)$. However, as $\nu$ edges towards $0$, the elementary approach of comparing the probability mass functions offers little hope in determining infinite divisibility of the CMP distribution, we need an entirely different approach to tackle the problem. Our strategy is to use complex analysis.

Throughout the rest, let $\nu\in(0,1)$ and $\lambda>0$ be given. We
assume on the contrary that $X\sim{\rm CMP}(\lambda,\nu)$ is infinitely
divisible. The main steps towards reaching a contradiction are
summarised as follows.
\begin{enumerate}
\item Under the assumption, $X$ must be a compound Poisson random variable,
say 
\[
X=\sum_{n=1}^{N}Y_{n},
\]
where $N$ is Poisson distributed and $\{Y_{1},Y_{2},\cdots\}$ are
i.i.d. positive integer-valued random variables and independent of $N$.
\item The probability generating function $G(z)$ of $Y_{1}$ must be an
entire function, i.e. holomorphic on the entire complex plane.
\item The exponential-type growth property of the probability generating
function $F(z)$ of $X$ forces $G(z)$ to have no more than polynomial growth.
\item $G(z)$ must be a polynomial as a consequence of complex analysis.
\item The precise growth estimate for $F(z)$ obtained in Step 3 further
forces $G(z)$ to be a monomial, which then leads to a contradiction
trivially.
\end{enumerate}
In the rest of this note, we develop the above steps carefully.

\subsection{The compound Poisson decomposition and related probability generating
functions}\label{decomp1}

Under the assumption of infinite divisibility, we know from \cite[p.~290]{Feller68}
 that $X$ is compound Poisson, i.e. 
\[
X\stackrel{d}{=}\sum_{n=1}^{N}Y_{n},
\]
where $N\sim{\rm Pn}(\mu)$ with some $\mu>0,$ $\{Y_{1},Y_{2},\cdots\}$
is an i.i.d. sequence of {$\mathbb{N}\triangleq\{0,1,2,\dots\}$}-valued random variables that
is independent of $N$. By absorbing the mass of $Y_{1}$ at the origin
to the parameter of $N$ if necessary, we may assume that $Y_{1}$
takes values in the positive integers {$\mathbb{N}_{+}\triangleq\{1,2,\dots\}$}. 
{Note that in this case we have
\[
\mathbb{V}[X] - \mathbb{E}[X] = \mathbb{E}[N]\big(\mathbb{E}[Y_1^2]-\mathbb{E}[Y_1]\big)\geqslant 0. 
\]This already implies that $\nu$ cannot be greater than $1$ if $\mathrm{CMP}(\lambda,\nu)$ were infinitely divisible.}

We now write
\[
q_{k}\triangleq\mathbb{P}(Y_{1}=k),\ \ \ k\in{\mathbb{N}_{+}},
\]
and let $G(z)$ denote the probability generating function of $Y_{1}$,
i.e. 
\begin{equation}
G(z)=\sum_{r=1}^{\infty}q_{r}z^{r}.\label{eq:G(z)}
\end{equation}
We treat $z$ as a complex variable and note that $G(z)$ is holomorphic
at least in the unit disk. From elementary probability theory, $e^{\mu(G(z)-1)}$
is the probability generating function of $X.$ By using the ${\rm CMP}$-distribution
of $X,$ we have 
\begin{equation}
e^{\mu G(z)}=\frac{e^{\mu}}{Z(\lambda,\nu)}\sum_{k=0}^{\infty}\frac{(\lambda z)^{k}}{(k!)^{\nu}}\label{eq:PGF}
\end{equation}
for those $z$'s within the radius of convergence of $G(z)$. By taking
$z=0,$ we get $e^{\mu}=Z(\lambda,\nu)$ and thus 
\[
e^{\mu G(z)}=\sum_{k=0}^{\infty}\frac{(\lambda z)^{k}}{(k!)^{\nu}}.
\]
Let us set 
\[
F(z)\triangleq\sum_{k=0}^{\infty}\frac{(\lambda z)^{k}}{(k!)^{\nu}}.
\]
Note that $F(z)$ defines an entire function.

\subsection{Holomorphicity of $G(z)$ on $\mathbb{C}$}

Our next step is to show that $G(z)$ must be an entire function.
Before doing so, we first recall the construction of the logarithm
of a holomorphic function (cf. \cite[p. 123]{Lang99}). Suppose
that $f(z)$ is a holomorphic function on a simply connected domain
$\Omega$ and is everywhere non-zero, then there exists a holomorphic
function $L(z)$ on $\Omega$ such that 
\begin{equation}
e^{L(z)}=f(z).\label{eq:LogF}
\end{equation}
Indeed, the equation (\ref{eq:LogF}) suggests that $L'(z)=\frac{f'(z)}{f(z)}$,
which leads us to defining
\[
L(z)\triangleq\int_{z_{0}}^{z}\frac{f'(w)}{f(w)}dw,\ \ \ z\in\Omega,
\]
where $z_{0}$ is a fixed based point in $\Omega$ and the integral
is performed along an arbitrary path joining $z_{0}$ to $z$. The
well-definedness of $L(z)$ is a simple consequence of the simply
connectedness of $\Omega$ and Cauchy's theorem.
\begin{lem}
\label{lem:S1}The function $G(z)$ defined by the power series (\ref{eq:G(z)})
is an entire function.
\end{lem}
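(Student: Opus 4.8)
The plan is to use the functional identity $e^{\mu G(z)}=F(z)$ to realise $G$ as a constant multiple of a logarithm of the entire function $F$, and then to invoke the nonnegativity of the coefficients $q_r$ to show that $F$ never vanishes, so that this logarithm, and hence $G$, is defined on all of $\mathbb{C}$.

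Let $R\in[1,\infty]$ be the radius of convergence of the series \eqref{eq:G(z)} (recall $R\geq1$ since $\sum_r q_r=1<\infty$), and let $\rho$ be the modulus of the zero of $F$ nearest the origin, with $\rho=\infty$ if $F$ has none. On the disk $D_R=\{|z|<R\}$ the identity $e^{\mu G(z)}=F(z)$ shows $F\neq0$, so $\rho\geq R$. I would first extend $G$ across $D_\rho$: since $D_\rho$ is simply connected and $F$ is holomorphic and nowhere zero there, the logarithm construction recalled before the lemma, with base point $z_0=0$, yields a holomorphic $H$ on $D_\rho$ with $e^{H}=F$ and $H(0)=0$. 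On $D_R$ both $e^{\mu G}=F$ and $e^{H}=F$ hold, so $\mu G-H$ is a continuous $2\pi i\mathbb{Z}$-valued function on the connected set $D_R$ vanishing at $0$, hence $\mu G\equiv H$ there. Thus $G=\mu^{-1}H$ extends holomorphically to $D_\rho$, which forces $R\geq\rho$; combined with $\rho\geq R$ this gives $R=\rho$.

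It then remains to prove $R=\rho=\infty$, and this is the step I expect to be the crux, as it is the only place where the probabilistic structure is really used. Suppose on the contrary that $\rho<\infty$, so $F$ has a zero $z_0$ with $|z_0|=\rho=R$. As $z\to z_0$ from within $D_R$ we have $F(z)\to0$, hence $\mu\operatorname{Re}G(z)=\log|F(z)|\to-\infty$. On the other hand, nonnegativity of the $q_r$ gives $|G(z)|\leq\sum_r q_r|z|^r=G(|z|)$ and therefore $\operatorname{Re}G(z)\geq-G(|z|)$. Along the real axis $\mu G(x)=\log F(x)$ for $x\in[0,R)$, so by monotone convergence $\sum_r q_r R^r=\lim_{x\to R^-}G(x)=\mu^{-1}\log F(R)$, which is finite because $F(R)\geq1$. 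Hence $G(|z|)\leq\sum_r q_r R^r<\infty$ remains bounded as $z\to z_0$, so $\operatorname{Re}G(z)$ is bounded below near $z_0$, contradicting $\operatorname{Re}G(z)\to-\infty$.

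This contradiction shows $\rho=\infty$, so $F$ is zero-free on $\mathbb{C}$ and $G=\mu^{-1}H=\mu^{-1}\log F$ is entire. I note that the same conclusion can be reached slightly more quickly by invoking the Pringsheim--Vivanti theorem: if $R<\infty$ then $z=R$ would be a singular point of $G$, yet $F(R)\geq1>0$ lets $\mu^{-1}\log F$ continue $G$ holomorphically across $z=R$, a contradiction. Either way, the decisive ingredient is the positivity $q_r\geq0$, which keeps $F$ strictly positive on $[0,\infty)$ and pins $\operatorname{Re}G$ from below; without it a boundary zero of $F$ could not be ruled out.
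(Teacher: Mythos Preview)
Your proof is correct and follows essentially the same approach as the paper: both arguments use the nonnegativity of the $q_r$ to deduce via monotone convergence that $G(R)=\mu^{-1}\log F(R)<\infty$, hence $G$ stays bounded on $\overline{B_R}$ and $F$ cannot vanish there, so a holomorphic logarithm of $F$ extends $G$ beyond $B_R$, contradicting finiteness of $R$. The paper organises the endgame slightly differently (it invokes continuity of $G$ on $\overline{B_R}$ and compactness to get $F\neq0$ on some $B_{R+\varepsilon}$, rather than your identification $R=\rho$ and the real-part contradiction at a boundary zero), and it does not mention the Pringsheim--Vivanti shortcut you note at the end, but the substance is the same.
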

\begin{proof}
Let $R$ be the radius of convergence of $G(z)$. Suppose on the contrary
that $R<\infty.$ We know that 
\[
e^{\mu G(z)}=F(z),\ \ \  z\in B_{R}\triangleq\{z:|z|<R\}.
\]
In particular, 
\[
e^{\mu G(\rho)}=F(\rho),\ \ \ \rho\in(0,R).
\]
Since the coefficients of $G(z)$ are non-negative, by the monotone
convergence theorem we have 
\[
e^{\mu G(R)}=F(R)<\infty.
\]
It follows that $G(z)$ is convergent on the entire boundary of $B_{R}.$
The dominated convergence theorem further implies that $G(z)$ is
continuous on the closed ball $\overline{B_{R}}$ and we thus have
\begin{equation}
e^{\mu G(z)}=F(z),\ \ \ z\in\overline{B_{R}}.\label{eq:ExpClosedBall}
\end{equation}

Now let $U\triangleq\{z\in\mathbb{C}:F(z)\neq0\}.$ Then $U$ is an
open subset of $\mathbb{C}.$ Since the exponential function is everywhere
non-vanishing, it follows from (\ref{eq:ExpClosedBall}) that $\overline{B_{R}}\subseteq U,$
and hence $B_{R+\varepsilon}\subseteq U$ for some $\varepsilon>0.$
As $B_{R+\varepsilon}$ is simply connected, there is a well-defined
logarithm of $F(z)$ on $B_{R+\varepsilon},$ namley a holomorphic
function $L(z)$ such that 
\[
e^{L(z)}=F(z),\ \ \ z\in B_{R+\varepsilon}.
\]
Combining with (\ref{eq:ExpClosedBall}), we obtain
\[
L(z)=\mu G(z)+2\pi ik(z),\ \ \ z\in B_{R}
\]
with some function $k:B_{R}\rightarrow\mathbb{Z}\triangleq\{0,\pm1,\pm2,\dots\}$. Since both $L(z)$
and $G(z)$ are continuous, the function $k(z)$ must be constant
(say $k(z)\equiv k^{*}$) and we arrive at 
\[
L(z)=\mu G(z)+2\pi k^{*}i=2\pi k^{*}i+\sum_{r=1}^{\infty}\mu q_{r}z^{r}.
\]
The power series on the right hand side gives the Taylor expansion
of $L(z)$. Since $L(z)$ is holomorphic on $B_{R+\varepsilon},$
its radius of convergence must be at least $R+\varepsilon.$ This contradicts
the assumption that $R$ is the radius of convergence for $G(z)$.
Therefore, the series (\ref{eq:G(z)}) is convergent on the entire
complex plane and $G(z)$ is thus an entire function.
\end{proof}

\subsection{Precise growth-type estimate of $F(z)$}

In this part, we investigate the precise growth of $F(z)$. This is
the core step of the argument. 

We first prepare a simple analytical lemma. 
\begin{lem}
\label{lem:StirComp}There exist positive constants $C_{1},C_{2}$
and $K$ depending only on $\nu$, such that 
\begin{equation}
C_{1}k^{\frac{{1-\nu}}{2}}\nu{}^{\nu k}\leqslant\frac{(k\nu)!}{(k!)^{\nu}}\leqslant C_{2}k^{\frac{{1-\nu}}{2}}\nu^{\nu k}\label{eq:StirComp}
\end{equation}
for all $k\geqslant K.$
\end{lem}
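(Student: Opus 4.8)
The plan is first to fix the meaning of the symbol $(k\nu)!$: since $\nu\in(0,1)$ the quantity $k\nu$ is in general not an integer, so $(k\nu)!$ must be read as $\Gamma(k\nu+1)$. Both bounds in \eqref{eq:StirComp} should then fall out of a two-sided form of Stirling's formula applied separately to the numerator $\Gamma(k\nu+1)$ and to the denominator $(k!)^{\nu}=\Gamma(k+1)^{\nu}$. The tool I would use is Binet's refinement of Stirling's formula, valid for every $x>0$,
\[
\Gamma(x+1)=\sqrt{2\pi}\,x^{x+\frac12}e^{-x}e^{\mu(x)},\qquad 0<\mu(x)<\frac{1}{12x},
\]
which I may apply at $x=k\nu$ and at $x=k$ at once, with no restriction on $k$ beyond $k\geqslant 1$.

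Next I would substitute these two expressions into the ratio and carry out the algebra. With $\Gamma(k\nu+1)$ expanded at $x=k\nu$ and $(k!)^{\nu}$ expanded at $x=k$ and raised to the power $\nu$, the factors $e^{-k\nu}$ cancel, the powers $k^{k\nu}$ cancel against $k^{\nu k}$, and the term $(k\nu)^{k\nu}$ contributes exactly the geometric factor $\nu^{\nu k}$. Collecting the leftover powers of $k$ and the numerical prefactors yields the exact identity
\[
\frac{(k\nu)!}{(k!)^{\nu}}=(2\pi)^{\frac{1-\nu}{2}}\sqrt{\nu}\;k^{\frac{1-\nu}{2}}\,\nu^{\nu k}\,e^{\mu(k\nu)-\nu\mu(k)},
\]
in which the claimed power $k^{(1-\nu)/2}$ and the factor $\nu^{\nu k}$ appear, multiplied by the constant $(2\pi)^{(1-\nu)/2}\sqrt{\nu}$ and a residual exponential error term.

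Finally I would control that error term. Since $0<\mu(x)<\frac{1}{12x}$, for $k\geqslant K$ one has
\[
-\frac{\nu}{12K}<\mu(k\nu)-\nu\mu(k)<\frac{1}{12K\nu},
\]
so $e^{\mu(k\nu)-\nu\mu(k)}$ is trapped between two positive constants depending only on $\nu$ and the fixed $K$. Absorbing these bounds into the leading constant produces the admissible choices $C_1=(2\pi)^{(1-\nu)/2}\sqrt{\nu}\,e^{-\nu/(12K)}$ and $C_2=(2\pi)^{(1-\nu)/2}\sqrt{\nu}\,e^{1/(12K\nu)}$, which establishes \eqref{eq:StirComp} for all $k\geqslant K$.

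I do not expect a substantial obstacle: the estimate is a careful but essentially routine bookkeeping exercise with Stirling's formula. The only points that genuinely require attention are interpreting the factorial of a non-integer argument through $\Gamma$, and insisting on the \emph{two-sided} Binet bound rather than a bare asymptotic equivalence, so that the final inequalities hold as stated and not merely in the limit $k\to\infty$.
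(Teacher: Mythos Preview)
Your proof is correct and follows essentially the same route as the paper: both arguments simply apply Stirling's formula to $\Gamma(k\nu+1)$ and to $\Gamma(k+1)^{\nu}$ and read off the asymptotic $\frac{(k\nu)!}{(k!)^{\nu}}\sim\sqrt{\nu}\,(2\pi k)^{(1-\nu)/2}\nu^{\nu k}$. The only difference is cosmetic: the paper invokes the bare asymptotic equivalence and lets the existence of $C_{1},C_{2},K$ follow from the definition of ``$\sim$'', whereas you use Binet's quantitative remainder $0<\mu(x)<\tfrac{1}{12x}$ to exhibit explicit constants.
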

\begin{proof}
We recall the following Stirling's approximation for the Gamma function for all positive real $x$:
\begin{equation}
x!\triangleq\Gamma(x+1)\sim\sqrt{2\pi x}\left(\frac{x}{e}\right)^{x}\ \ \ \text{as }x\rightarrow\infty.
\end{equation}
As a result, we have
\[
\frac{(k\nu)!}{(k!)^{\nu}}\sim\sqrt{\nu}(2\pi k)^{\frac{1-\nu}{2}}\nu^{\nu k}\ \ \ \text{as }k\rightarrow\infty,
\] and the claim thus follows. 
\end{proof}

The main result for this part is stated below. It quantifies the
precise growth rate of $F(z)$ as $z$ approaches infinity along the positive axis.
\begin{lem}
\label{lem:S3}Let $M\triangleq\lambda\nu{}^{\nu}.$ There exists a
constant $C>0$ as well as two polynomials $p_{1},p_{2}$ with positive
coefficients such that 
\begin{equation}
\frac{C}{R^{{1/\nu}}}e^{(MR)^{1/\nu}}-p_{1}(R)\leqslant F(R)\leqslant p_{2}(R)e^{(MR)^{1/\nu}}\label{eq:GrowF}
\end{equation}
for all $R$ with $MR>1$.
\end{lem}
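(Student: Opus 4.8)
The plan is to reduce the estimation of $F(R)$ to that of a Mittag--Leffler--type series $\sum_{k}(MR)^{k}/(k\nu)!$, whose exponential rate can be read off from the location of its dominant term. First I would absorb the factor $\nu^{\nu}$ into the summation variable,
\[
F(R)=\sum_{k=0}^{\infty}\frac{(\lambda R)^{k}}{(k!)^{\nu}}=\sum_{k=0}^{\infty}\frac{(MR)^{k}}{(k!)^{\nu}\nu^{\nu k}},
\]
and then apply Lemma~\ref{lem:StirComp}: for $k\geqslant K$ it pins $\frac{1}{(k!)^{\nu}\nu^{\nu k}}=\frac{1}{(k\nu)!}\cdot\frac{(k\nu)!}{(k!)^{\nu}\nu^{\nu k}}$ between $C_{1}k^{(1-\nu)/2}/(k\nu)!$ and $C_{2}k^{(1-\nu)/2}/(k\nu)!$. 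Hence, up to the constants $C_{1},C_{2}$ and the finitely many terms $k<K$ (a polynomial in $R$), the problem becomes a two--sided estimate of $\sum_{k\geqslant K}k^{(1-\nu)/2}(MR)^{k}/(k\nu)!$ against $e^{(MR)^{1/\nu}}$.

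The source of the precise exponent is a cancellation at the peak of the summand $b_{k}=(MR)^{k}/\Gamma(\nu k+1)$. Writing $x=MR$, Stirling's formula places the maximiser at $k^{*}=(\lambda R)^{1/\nu}$ (equivalently $\nu k^{*}=x^{1/\nu}$), and there the super--exponential factors cancel exactly, leaving $b_{k^{*}}=e^{x^{1/\nu}}/\sqrt{2\pi x^{1/\nu}}\,(1+o(1))$; this is exactly where the rate $(MR)^{1/\nu}$ and the constant $M=\lambda\nu^{\nu}$ enter. For the upper bound I would then dominate the weighted series by its maximal term times an effective number of terms: the ratio $b_{k+1}/b_{k}=x\,\Gamma(\nu k+1)/\Gamma(\nu k+\nu+1)$ decreases through $1$ at $k^{*}$, so $b_{k}$ is log--concave and its tails on either side of $k^{*}$ are controlled by a geometric majorant, while the bulk is controlled by the Gaussian width $\asymp x^{1/(2\nu)}$. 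Multiplying $b_{k^{*}}$ by this width and by the weight $(k^{*})^{(1-\nu)/2}$ gives a positive power of $R$, which with the initial polynomial segment is absorbed into $p_{2}(R)$, yielding $F(R)\leqslant p_{2}(R)e^{(MR)^{1/\nu}}$.

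For the lower bound a single term suffices. Taking $k_{0}=\lceil(\lambda R)^{1/\nu}\rceil$,
\[
F(R)\geqslant\frac{(\lambda R)^{k_{0}}}{(k_{0}!)^{\nu}},
\]
and feeding an explicit Stirling upper bound for $k_{0}!$ into the same cancellation (the integer rounding costing only a bounded factor) turns the right--hand side into $C'(\lambda R)^{-1/2}e^{(MR)^{1/\nu}}$ for all large $R$; since $1/2<1/\nu$ this is at least $CR^{-1/\nu}e^{(MR)^{1/\nu}}$ once $R\geqslant 1$. The remaining bounded range $1<MR\leqslant R_{0}$, where $k_{0}$ might drop below the threshold $K$, is dealt with trivially by choosing $p_{1}(R)$ large enough to render the claimed inequality vacuous there; this is precisely the role of the subtracted polynomial $p_{1}(R)$ in \eqref{eq:GrowF}.

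I expect the main obstacle to be the upper bound. Unlike the lower bound, no single term suffices, so one must control the entire tail of a $\nu$--spaced series and still extract only a polynomial prefactor, all while staying elementary and not invoking the sharp Mittag--Leffler asymptotics. The two supporting ingredients --- the exact exponent cancellation at $k^{*}$ and the log--concavity/geometric--tail control of $\sum_{k}k^{(1-\nu)/2}b_{k}$ --- are individually routine, but must be assembled so that the prefactors emerge in the asymmetric form ($R^{-1/\nu}$ on the left, a genuine polynomial $p_{2}(R)$ on the right) demanded by the statement.
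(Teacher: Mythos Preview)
Your plan is sound and would yield the lemma, but it follows a genuinely different path from the paper after the common first step (using Lemma~\ref{lem:StirComp} to pass from $(k!)^{\nu}$ to $(k\nu)!$). The paper never locates the peak $k^{*}$ nor appeals to log-concavity or a Gaussian width. Instead, it re-indexes the sum by writing each $k$ uniquely as $k=mp+r$ with $p\triangleq 1/\nu$, $m\in\mathbb{N}$ and $r\in[0,p)$, so that $(k\nu)!=(m+r\nu)!$; bounding $(m+r\nu)!$ between $(m-1)!$ and $(m+1)!$ and pulling out the inner sum over the at most $[p]+1$ values of $r$ (which contributes only a factor $(MR)^{O(1)}$) leaves literally $\sum_{m}(MR)^{mp}/(m\pm1)!$, i.e.\ the exponential series in $(MR)^{1/\nu}$. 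This delivers both inequalities in \eqref{eq:GrowF} in a few lines with explicit polynomials $p_{1},p_{2}$ and no further Stirling work. Your Laplace-type argument is more conceptual---it makes transparent why the rate is $(MR)^{1/\nu}$ and even gives a sharper prefactor $R^{-1/2}$ below---but the upper bound then requires you to turn ``geometric tails plus Gaussian bulk'' into a rigorous estimate (e.g.\ by quantifying the second difference of $\log b_{k}$ near $k^{*}$, or by splitting crudely at $2k^{*}$), which is extra work the paper's re-indexing sidesteps entirely. Your single-term lower bound at $k_{0}=\lceil(\lambda R)^{1/\nu}\rceil$ is correct and arguably cleaner than the paper's, which again goes through the $k=mp+r$ decomposition.
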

\begin{proof}
We first establish the upper bound. Let $K$ be as in Lemma \ref{lem:StirComp}. Enlarging $K$ if necessary, we assume $K\nu\ge 2$. For $k\ge K$, the right hand side of \eqref{eq:StirComp} ensures
$$\frac{(\lambda R)^k}{(k!)^\nu}\le C_2\frac{(MR)^k}{(k\nu)!}k^{\frac{1-\nu}{2}}\le C_3\frac{(MR)^k}{(k\nu-1)!},$$
where $C_3\triangleq C_2/\nu.$ Hence,
\begin{equation}
F(R)=\sum_{k=0}^{K-1}\frac{(\lambda R)^{k}}{(k!)^{\nu}}+\sum_{k=K}^{\infty}\frac{(\lambda R)^{k}}{(k!)^{\nu}}\leqslant\sum_{k=0}^{K-1}\frac{(\lambda R)^{k}}{(k!)^{\nu}}+C_3\sum_{k=K}^{\infty}\frac{(MR)^{k}}{(k\nu-1)!}.\label{lemma3-1}
\end{equation}
We now examine the last summation in the above inequality by introducing
the division $k=mp+r$ where $p\triangleq1/\nu$, $m\in{\mathbb{N}}$
and $r\in[0,p).$ To be more precise, for each $m\in{\mathbb{N}}$ we
set
\[
R_{m}\triangleq\{r\in[0,p):mp+r\in\mathbb{Z}\}.
\]
Note that $R_{m}\neq\emptyset$ (since $p>1$) and contains at most
$[p]+1$ elements, {where $[p]$ denotes the integer part of $p$}. It is clear that each $k\in{\mathbb{N}}$ can be
written as $k=mp+r$ with some $m\in{\mathbb{N}}$ and $r\in R_{m}$.
To see the uniqueness of such decomposition, suppose that 
\[
mp+r=m'p+r'.
\]
Then 
\[
(m-m')p=r'-r\in(-p,p).
\]
As a result, $m=m'$ and $r=r'$. It follows that $k\leftrightarrow(m,r)$
is a one-to-one correspondence. By using this decomposition, we have
\begin{align*}
\sum_{k=K}^{\infty}\frac{(MR)^{k}}{(k\nu-1)!} & \le \sum_{m=1}^{\infty}\sum_{r\in R_{m}}\frac{(MR)^{mp+r}}{(m+r\nu-1)!}\\
 & \leqslant\sum_{m=1}^{\infty}\frac{(MR)^{mp}}{(m-1)!}\sum_{r\in R_{m}}(MR)^{r}\\
 & \leqslant([p]+1)(MR)^{2p}e^{(MR)^{p}},
\end{align*}provided that $MR>1$. Since the second last summation in
(\ref{lemma3-1}) is polynomial, the desired upper bound follows.

The idea of establishing the lower bound is similar. By using Lemma
\ref{lem:StirComp}, we have 
\begin{align*}
F(R) & \geqslant C_{1}\sum_{k=K}^{\infty}\frac{k^{\frac{1-\nu}{2}}(MR)^{k}}{(k\nu)!}\geqslant C_{1}\sum_{k=K}^{\infty}\frac{(MR)^{k}}{(k\nu)!}\\
 & =C_{1}\left(\sum_{k=0}^{\infty}\frac{(MR)^{k}}{(k\nu)!}-\sum_{k=0}^{K-1}\frac{(MR)^{k}}{(k\nu)!}\right)\\
 & =C_{1}\sum_{m=0}^{\infty}\sum_{r\in R_{m}}\frac{(MR)^{mp+r}}{(m+r\nu)!}-p_{1}(R),
\end{align*}
where 
\[
p_{1}(R)\triangleq C_{1}\sum_{k=0}^{K-1}\frac{(MR)^{k}}{(k\nu)!}.
\]
To estimate the double summation, we observe that (assuming $MR>1$)
\[
\sum_{r\in R_{m}}\frac{(MR)^{mp+r}}{(m+r\nu)!}\geqslant\frac{(MR)^{mp}}{(m+1)!}\sum_{r\in R_{m}}(MR)^{r}\geqslant\frac{(MR)^{mp}}{(m+1)!}
\]
where the last inequality follows from the fact that $R_{m}\neq\emptyset$.
Therefore,
\[
\sum_{m=0}^{\infty}\sum_{r\in R_{m}}\frac{(MR)^{mp+r}}{(m+r\nu)!}\geqslant\sum_{m=0}^{\infty}\frac{(MR)^{mp}}{(m+1)!}=(MR)^{-p}\big(e^{(MR)^{p}}-1\big).
\]
The desired lower bound thus follows.

\begin{comment}
The idea of establishing the lower estimate is similar. By using Lemma
\ref{lem:StirComp}, we have 
\begin{align}
F(R) & \geqslant C_{1}\sum_{k=K}^{\infty}\frac{k^{\frac{\nu-1}{2}}(MR)^{k}}{(k\nu)!}\geqslant \frac{C_{3}}{R}\sum_{k=K-1}^{\infty}\frac{(MR)^{k}}{(k\nu)!}\nonumber\\
% & =C_{1}\big(\sum_{k=0}^{\infty}\frac{(MR)^{k}}{(k\nu)!}-\sum_{k=0}^{K-1}\frac{(MR)^{k}}{(k\nu)!}\big)\\
 & =\frac{C_{3}}{R}\sum_{m=0}^{\infty}\sum_{r\in R_{m}}\frac{(MR)^{mp+r}}{(m+r\nu)!}-p_{1}(R),\label{lemma3-2}
\end{align}
where 
\[
p_{1}(R)\triangleq \frac{C_{3}}{R}\sum_{k=0}^{K-2}\frac{(MR)^{k}}{(k\nu)!}.
\]
To estimate the double summation in (\ref{lemma3-2}), we observe that (assuming $MR>1$)
\[
\sum_{r\in R_{m}}\frac{(MR)^{mp+r}}{(m+r\nu)!}\geqslant\frac{(MR)^{mp}}{(m+1)!}\sum_{r\in R_{m}}(MR)^{r}\geqslant\frac{(MR)^{mp}}{(m+1)!},
\]
where the last inequality follows from the fact that $R_{m}\neq\emptyset$.
Therefore,
\[
\sum_{m=0}^{\infty}\sum_{r\in R_{m}}\frac{(MR)^{mp+r}}{(m+r\nu)!}\geqslant\sum_{m=0}^{\infty}\frac{(MR)^{mp}}{(m+1)!}=(MR)^{-p}\big(e^{(MR)^{p}}-1\big).
\]
The desired lower estimate thus follows.
\end{comment}
\end{proof}

\subsection{The function $G(z)$ is a polynomial}

Let $d\geqslant2$ be the unique integer such that $1/\nu\in(d-1,d].$
It follows from Lemma \ref{lem:S3} that 
\[
F(R)\leqslant e^{M'R^{d}}\ \ \ \mbox{for } R\ \text{sufficiently large}
\]
with some $M'>0$. On the other hand, since $G(z)$ has non-negative
coefficients, we have
\[
e^{\mu|G(z)|}\leqslant e^{\mu G(|z|)}=F(|z|).
\]
As a result, we have
\[
|G(z)|\leqslant\frac{1}{\mu}\log F(|z|)\leqslant\frac{M'}{\mu}|z|^{d}\ \ \ \mbox{for } z\ \text{large}.
\]
Since $G(z)$ is an entire function, the following complex analysis
lemma implies that $G(z)$ has to be a polynomial.
\begin{lem}
Let $g(z)$ be an entire function such that 
\[
|g(z)|\leqslant C|z|^{d}
\]
for all large $z$. Then $g(z)$ is a polynomial of degree at most
$d$.
\end{lem}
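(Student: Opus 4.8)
The plan is to deduce this from Cauchy's estimates for the Taylor coefficients of an entire function---this is precisely the generalized Liouville theorem. Since $g(z)$ is entire, it has a globally convergent power series expansion $g(z)=\sum_{n=0}^{\infty}a_{n}z^{n}$, and each coefficient is recovered by Cauchy's integral formula
\[
a_{n}=\frac{1}{2\pi i}\oint_{|z|=R}\frac{g(z)}{z^{n+1}}\,dz
\]
for every radius $R>0$.

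The first step is to fix $R$ large enough that the hypothesis $|g(z)|\leqslant C|z|^{d}$ holds on the circle $|z|=R$, so that $|g(z)|\leqslant CR^{d}$ there. Applying the standard length-times-maximum estimate to the contour integral for $a_{n}$ then yields
\[
|a_{n}|\leqslant\frac{1}{2\pi}\cdot 2\pi R\cdot\frac{CR^{d}}{R^{n+1}}=CR^{d-n}.
\]
The second step is to let $R\to\infty$: for every index $n>d$ the exponent $d-n$ is strictly negative, so $CR^{d-n}\to 0$, which forces $a_{n}=0$. Hence all coefficients beyond order $d$ vanish and $g(z)$ reduces to a polynomial of degree at most $d$.

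There is no genuine obstacle here; the argument is the classical generalization of Liouville's theorem, with Cauchy's estimates doing all the work. The only point worth noting is that the growth bound is assumed merely for large $z$, which is exactly the regime that matters, since the vanishing of each high-order coefficient is extracted by sending $R\to\infty$ and the behaviour of $g$ on any bounded region is irrelevant. It is this lemma, applied with $d\geqslant 2$ the integer satisfying $1/\nu\in(d-1,d]$, that completes Step~4 and shows $G(z)$ must be a polynomial.
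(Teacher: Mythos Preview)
Your proof is correct and matches the paper's approach exactly: both use Cauchy's integral formula for the Taylor coefficients, bound $|a_n|$ by $CR^{d-n}$ on a large circle, and send $R\to\infty$ to kill all coefficients with $n>d$. The only cosmetic difference is that the paper parametrises the circle as $z=Re^{i\theta}$ before estimating, whereas you apply the length-times-maximum inequality directly; the resulting bound is identical.
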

\begin{proof}
We write 
\[
g(z)=\sum_{k=0}^{\infty}a_{k}z^{k},\ \ \ z\in\mathbb{C}.
\]
From Cauchy's integral formula, the Taylor coefficients are given
by 
\[
a_{k}=\frac{1}{2\pi i}\int_{\partial B_{R}}\frac{g(z)}{z^{k+1}}dz=\frac{1}{2\pi}\int_{0}^{2\pi}\frac{g(Re^{i\theta})}{(Re^{i\theta})^{k}}d\theta,\ \ \ k=0,1,2,\cdots.
\]
Note that the above formula is true for all $R>0.$ By the assumption,
we have
\[
|a_{k}|\leqslant\frac{C}{2\pi}\int_{0}^{2\pi}\frac{R^{d}}{R^{k}}d\theta=CR^{d-k}
\]
for all large $R$. For each $k>d,$ by taking $R\rightarrow\infty$
we conclude that $a_{k}=0$.
\end{proof}
\begin{cor}
\label{cor:S4}The function $G(z)$ is a polynomial of degree at most
$d$, i.e. $G(z)=q_{1}z+\cdots+q_{d}z^{d}$. In particular, $Y_{1}$
is supported on $\{1,\cdots,d\}$.
\end{cor}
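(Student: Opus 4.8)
The plan is to read the corollary directly off the growth bound already established in this subsection together with the complex analysis lemma just proved; no new analytic input is needed.

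First I would collect the two facts now in hand. By Lemma~\ref{lem:S1}, $G(z)$ is entire. Moreover, combining the upper bound of Lemma~\ref{lem:S3} (in the form $F(R)\leqslant e^{M'R^{d}}$ for large $R$) with the elementary inequality $e^{\mu|G(z)|}\leqslant e^{\mu G(|z|)}=F(|z|)$, which holds because $G$ has non-negative coefficients, we obtained
\[
|G(z)|\leqslant\frac{M'}{\mu}|z|^{d}\qquad\text{for all large }z.
\]

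Next I would apply the preceding lemma with $g=G$ and exponent $d$. Since $G$ is entire and obeys the displayed polynomial bound, the lemma forces $G$ to be a polynomial of degree at most $d$. Finally, I would pin down its shape: because $Y_{1}$ is $\mathbb{N}_{+}$-valued, the power series \eqref{eq:G(z)} carries no constant term, so the polynomial must be $G(z)=q_{1}z+\cdots+q_{d}z^{d}$. Reading off coefficients, $q_{r}=\mathbb{P}(Y_{1}=r)=0$ for every $r>d$, which is exactly the assertion that $Y_{1}$ is supported on $\{1,\dots,d\}$.

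I do not expect any real obstacle. The difficult work sits in Lemma~\ref{lem:S3} (the sharp growth of $F$) and in the polynomial-growth lemma, both already proved; the corollary is a bookkeeping step. The only subtlety worth flagging is that the expansion of $G$ starts at $r=1$, so the constant term vanishes automatically and the degree-$d$ polynomial has precisely the claimed form.
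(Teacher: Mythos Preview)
Your proposal is correct and follows exactly the paper's argument: the text preceding the corollary derives $|G(z)|\leqslant\frac{M'}{\mu}|z|^{d}$ for large $z$ from Lemma~\ref{lem:S3} and the non-negativity of the coefficients of $G$, and then invokes the polynomial-growth lemma. The paper states the corollary without a separate proof, treating it (as you do) as an immediate consequence of these two ingredients together with the fact that $G$ has no constant term.
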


\subsection{Reaching the contradiction}

We are now in a position to complete the proof of Theorem \ref{thm:main}.
The main point is that the growth estimate (\ref{eq:GrowF}) of $F(R)$
forces the polynomial $G(z)$ to consist of the single monomial $q_{d}z^{d}$
only, which then trivially leads to a contradiction since $X$ achieves
all possible values in $\mathbb{N}$. Recall that $d\geqslant2$ is
the unique integer such that $1/\nu\in(d-1,d]$.

\begin{proof}[Proof of Theorem \ref{thm:main}]

According to Lemma \ref{lem:S3} and Corollary \ref{cor:S4}, we have
\begin{equation}
\frac{C}{R^{1/\nu}}e^{(MR)^{1/\nu}}-p_{1}(R)\leqslant e^{\mu(q_{1}R+\cdots+q_{d}R^{d})}\leqslant p_{2}(R)e^{(MR)^{1/\nu}}\label{eq:UpperS5}
\end{equation}
for all sufficiently large $R.$ The lower bound forces $q_{d}$
to be non-zero. In the case when $\nu\neq1/d$, the upper bound
in (\ref{eq:UpperS5}) cannot hold true when $R\rightarrow\infty,$
giving a contradiction. It now remains to consider the
case when $\nu=1/d$. In this case, the upper bound implies $\mu q_{d}\leqslant M^{1/\nu}$
while the lower bound implies that $\mu q_{d}\geqslant M^{1/\nu}.$
Therefore, $\mu q_{d}=M^{1/\nu}.$ If $q_{1},\cdots,q_{d-1}$ were
not all zero, the upper bound cannot hold true. As a result, we
have $q_{1}=\cdots=q_{d-1}=0$. In particular, $q_{d}=1$ and $G(z)=z^{d}.$
Since $d\geqslant2,$ this clearly contradicts the fact that $\mathbb{P}(X=1)>0.$

Now the proof of Theorem \ref{thm:main} is complete.

\end{proof}

\end{document}